\numberwithin{equation}{section}
\numberwithin{figure}{section}
\theoremstyle{plain}
\newtheorem{thm}{\protect\theoremname}[section]
  \theoremstyle{plain}
  \newtheorem{lem}[thm]{\protect\lemmaname}
  \theoremstyle{plain}
  \theoremstyle{remark}
  \newtheorem{rem}[thm]{\protect\remarkname}
  \theoremstyle{plain}
\newtheorem{theorem}{Theorem}[section]
\newtheorem{example}[theorem]{Example}
\providecommand{\lemmaname}{Lemma}
  \providecommand{\propositionname}{Proposition}
  \providecommand{\remarkname}{Remark}
\providecommand{\theoremname}{Theorem}
\providecommand{\lemmaname}{Lemma}
  \providecommand{\propositionname}{Proposition}
  \providecommand{\remarkname}{Remark}
\providecommand{\theoremname}{Theorem}
\providecommand{\corollaryname}{Corollary}
  \providecommand{\lemmaname}{Lemma}
  \providecommand{\propositionname}{Proposition}
  \providecommand{\remarkname}{Remark}
\providecommand{\theoremname}{Theorem}
  \providecommand{\corollaryname}{Corollary}
  \providecommand{\lemmaname}{Lemma}
  \providecommand{\propositionname}{Proposition}
  \providecommand{\remarkname}{Remark}
\providecommand{\theoremname}{Theorem}
\begin{document}

\title [Weak$^*$
fixed point property and the space of affine functions]  {Weak$^*$
	fixed point property \\ and the space of affine functions}

\author{E. Casini}

\address{Dipartimento di Scienza e Alta Tecnologia, Università dell'Insubria,
via Valleggio 11, 22100 Como, Italy }

\email{emanuele.casini@uninsunbria.it}

\author{E. Miglierina}

\address{Dipartimento di Matematica per le Scienze economiche, finanziarie ed attuariali,
Università Cattolica del Sacro Cuore, Via Necchi 9, 20123 Milano,
Italy }

\email{enrico.miglierina@unicatt.it}

\author{\L. Piasecki}

\address{Instytut Matematyki,
Uniwersytet Marii Curie-Sk{\l}odowskiej, Pl. Marii Curie-Sk{\l}odowskiej 1, 20-031 Lublin, Poland}

\email{piasecki@hektor.umcs.lublin.pl}
\begin{abstract}
First we prove that if a separable Banach space $X$ contains an isometric copy of an infinite-dimensional space $A(S)$ of affine continuous functions on a Choquet simplex $S$, then its dual $X^*$ lacks the weak$^*$ fixed point property for nonexpansive mappings.
Then, we show that the dual of a separable Lindenstrauss space $X$ fails the weak$^*$ fixed point property for nonexpansive mappings if and only if $X$ has a quotient isometric to some space $A(S)$. Moreover, we provide an example showing that \textquotedblleft quotient\textquotedblright{} cannot be replaced by \textquotedblleft subspace\textquotedblright{}. Finally, it is worth to be mentioned that in our characterization the space $A(S)$ cannot be substituted by any space $\mathcal{C}(K)$ of continuous functions on a compact Hausdorff $K$.
\end{abstract}

\subjclass[2010]{Primary 47H09; Secondary 46B25}

\keywords{nonexpansive mappings, $w^*$-fixed point property, Lindenstrauss spaces, spaces of affine functions, spaces of continuous functions}

\maketitle

\section{Introduction}
Let $X$ be an infinite-dimensional real Banach space and let us denote by $B_X$ its closed unit ball and by $S_{\! X}$ its unit sphere. A Banach space $X$ is called an $L_1$-predual  (or a Lindenstrauss space) if its dual $X^*$ is isometric to $L_1(\mu)$ for some
measure $\mu$. The most widely studied $L_1$-preduals are classical  Banach spaces $\mathcal{C}(K)$ of continuous functions on a compact Hausdorff space $K$. In this paper two other subclasses of Lindenstrauss spaces play a crucial role. The first one is the well-known class of spaces $A(S)$ of continuous affine functions defined on a Choquet simplex $S$. It is worth to be mentioned that the class of $A(S)$ spaces is strictly wider than the class of $\mathcal{C}(K)$ spaces. Moreover, an $L_1$-predual $X$ is an $A(S)$ space if and only if $B_X$ has an extreme point (\cite{Semadeni1964}). The second class that we are interested in is the collection of all hyperplanes in $c$, the space of convergent sequences endowed with the standard supremum norm. These hyperplanes were extensively studied in \cite{Casini-Miglierina-Piasecki2014,Casini-Miglierina-Piasecki2015}. Here we recall some notations and properties about these spaces that will be useful in the sequel. It is well-known that $c^*$ can be
isometrically identified with $\ell_1$ in the following way: for
every $x^*\in c^*$ there exists a unique element $f=(f(1),f(2),\dots)\in \ell_1$
such that
\[
x^*(x)=\sum_{n=0}^\infty f(n+1)x(n)=f(x)
\]
with $x=(x(1),x(2),\dots)\in c$ and $x(0)=\lim x(n)$.
Let $f\in S_{\! c^*}$. Consider the hyperplane in $c$ defined by
\[
W_f=\left\lbrace x\in c:f(x)=0 \right\rbrace.
\]
In \cite{Casini-Miglierina-Piasecki2014}, the following results are proved:
\begin{enumerate}
	\item $W_f^*$ is isometric to $\ell_1$ if and only if there exists $j_0\geq1$ such that $|f(j_0)|\geq 1/2$,
	\item $W_f$ is isometric to $c$ if and only if there exists $j_0\geq2$ such that $|f(j_0)|\geq 1/2$.
\end{enumerate}

\quad

Under the additional assumption $|f(1)|\geq 1/2$ and $|f(j)|< 1/2$ for every $j\geq2$, Theorem 4.3 in \cite{Casini-Miglierina-Piasecki2014} identifies $W_f^*$ and $\ell_1$ by giving the following dual action: for every $x^*\in W_f^*$ there exists a unique element $g\in\ell_1$ such that
\begin{equation}\label{dualityW}
x^*(x)=\sum_{n=1}^\infty g(n)x(n)=g(x),
\end{equation}
where $x=(x(1),x(2),\dots)\in W_f$. Moreover, if $\left\lbrace e^*_n\right\rbrace$ denotes the standard basis in $\ell_1$, then 
\begin{equation}
e^*_{n} \xrightarrow{\sigma(\ell_{1},W_{f})} \left( -\frac{f(2)}{f(1)},-\frac{f(3)}{f(1)},-\frac{f(4)}{f(1)},\dots\right), \tag{$\heartsuit$}
\end{equation}
where $\sigma(X^*,X)$ denotes the weak$^*$ topology on $X^*$ induced by $X$.

The aim of this paper is to investigate the relationships between presence of an isometric copy of an $A(S)$ space in a separable space $X$ and the failure of weak$^*$ fixed point property for nonexpansive mapping in the dual space $X^*$. We recall that the space $X^*$ is said to have the weak$^*$ fixed
point property (briefly, $\sigma(X^*,X)$-FPP) if for every nonempty, convex,
$\sigma(X^*,X)$-compact subset $C$ of $X^*$, every nonexpansive mapping (i.e., a mapping $T:C\rightarrow C$ such that $\|T(x)-T(y)\|\leq \|x-y\|$ for all $x,y\in C$) has a fixed point.

First we prove that if a separable Banach space $X$ contains an isometric copy of an infinite-dimensional space $A(S)$, then its dual $X^*$ lacks the $\sigma(X^*,X)$-FPP (see Theorem \ref{thm sufficient}). This sufficient condition can be extended by considering a quotient containing an isometric copy of an $A(S)$ space (see Remark \ref{rem quotient}). Our theorem extends, in a separable case, the result by Smyth stating  that $\mathcal{C}(K)^*$ fails the $\sigma\left(\mathcal{C}(K)^*,\mathcal{C}(K) \right)$-FPP (\cite{Smyth1994}).  

In the last section we discuss the case of separable $L_1$-preduals and we completely characterize the weak$^*$ topologies that fail the $\sigma(X^*,X)$-FPP. Indeed, we prove that the dual $X^*$ of a separable Lindenstrauss space $X$ fails the $\sigma(X^*,X)$-FPP if and only if $X$ has a quotient isometric to an infinite-dimensional $A(S)$ space for some Choquet simplex $S$. We also show that the latter condition may be replaced by: $X$ has a quotient containing an isometric copy of an infinite-dimensional $A(S)$ space for some Choquet simplex $S$ (see Theorem \ref{thm separable Lindenstrauss}). 
Finally, one may ask whether in the previous results the space $A(S)$ can be replaced by a space $\mathcal{C}(K)$ or if the quotient can be removed, in a sense that these conditions can be replaced by: $X$ has a subspace isometric to an infinite-dimensional $A(S)$ space. Remark \ref{Remark C(K)} and Example \ref{Example Hyperplane} show that the answers for both questions are negative.

\section{Weak$^*$ fixed point property in the dual of separable Banach space }\label{Section separable}

This section is devoted to study a sufficient condition for the failure of the $\sigma(X^*,X)$-FPP for a generic separable space $X$ in term of the presence of an isometric copy of an $A(S)$ space. In \cite{Casini-Miglierina-Piasecki2015}, we provided a sufficient condition of similar type but based on the presence of an isometric copy of the so-called bad hyperplane in $c$. Recall that the hyperplane $W_f$ is called {\it bad} if $f\in\ell_1$ is such that $\left\| f\right\|=1$,
$\left| f(1)\right|=\frac{1}{2}$ and the set $N^+=\left\lbrace n\in
\mathbb{N}:f(1) f(n+1) \leq 0\right\rbrace $ is infinite. The word bad was chosen with respect to the $\sigma(\ell_1,W_f)$-FPP since the dual of every bad $W_f$ lacks the $\sigma(\ell_1,W_f)$-FPP. The statement of our aforementioned result is: 
\begin{thm}\label{Theorem bad}(Theorem 3.7 in \cite{Casini-Miglierina-Piasecki2015})
	Let $X$ be a separable Banach space. If $X$ contains a subspace isometric to a bad hyperplane, then $X^*$ fails the $\sigma(X^*,X)$-FPP.  
\end{thm}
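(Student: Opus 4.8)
The plan is to reduce to the concrete $\ell_1$-model supplied by the two recalled facts and then transport an explicit fixed-point-free nonexpansive map from $W_f^*$ up to $X^*$ by Hahn--Banach extension, using separability to control the weak$^*$ limits. First I would fix an isometric copy $Y\subseteq X$ of the bad hyperplane $W_f$ and record the geometry. Since $|f(1)|=1/2$ and (because $\sum_{j\ge 2}|f(j)|=1/2$ is spread over infinitely many nonzero terms) $|f(j)|<1/2$ for $j\ge 2$, the first recalled fact gives $Y^{*}=W_f^{*}$ isometric to $\ell_1$, and the identification \eqref{dualityW} together with $(\heartsuit)$ shows $e_n^{*}\to p$ in $\sigma(\ell_1,W_f)$ with $p=(-f(2)/f(1),-f(3)/f(1),\dots)$. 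A direct computation from $\|f\|=1$, $|f(1)|=1/2$ gives $\|p\|_1=1$, so the standard basis converges weak$^{*}$ to a unit vector; moreover $n\in N^{+}$ exactly when the $n$-th coordinate of $p$ is nonnegative, so badness means $p$ has infinitely many nonnegative coordinates.

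The core analytic ingredient is the failure of $\sigma(\ell_1,W_f)$-FPP for a bad hyperplane, which I may take as given since it is precisely the fact recalled in the text. Concretely one has a weak$^{*}$-compact convex set $C\subseteq\ell_1$, lying inside the weak$^{*}$-closed convex hull $\overline{\mathrm{conv}}^{\,w^{*}}\{e_n^{*}\}$, and a fixed-point-free nonexpansive $T\colon C\to C$ that is affine and weak$^{*}$-continuous and built from a shift supported on the indices of $N^{+}$ (possibly together with a small affine correction). Its nonexpansiveness comes from the fact that the standard $\ell_1$-basis is $1$-separated with additive norm, and its lack of fixed points from a drift of mass toward infinity, which is possible precisely because $N^{+}$ is infinite and $\|p\|=1$.

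For the transfer, let $Q=\iota^{*}\colon X^{*}\to Y^{*}$ be the restriction dual to the isometric inclusion $\iota\colon Y\hookrightarrow X$; it is linear, weak$^{*}$-to-weak$^{*}$ continuous, of norm one, and onto. For each $n$ choose a norm-preserving Hahn--Banach extension $z_n^{*}\in S_{X^{*}}$ of $e_n^{*}$, so $Qz_n^{*}=e_n^{*}$ and $\|z_n^{*}\|=1$. Since $X$ is separable, $(B_{X^{*}},w^{*})$ is compact and metrizable, so after passing to a subsequence $z_n^{*}\to z^{*}$ weak$^{*}$ with $\|z^{*}\|=1$ and $Qz^{*}=p$. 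Put $\tilde C=\overline{\mathrm{conv}}^{\,w^{*}}\{z_n^{*}\}$, a weak$^{*}$-compact convex subset of $B_{X^{*}}$, and define $\tilde T$ on finite convex combinations by replacing, in the expression of $T$, each basis functional $e_n^{*}$ by its extension $z_n^{*}$, then extending weak$^{*}$-continuously. On finite combinations the scheme works cleanly: because $Q$ is norm-nonincreasing one has $\|u\|_{X^{*}}\ge\|Qu\|_{Y^{*}}$, while the triangle inequality with $\|z_n^{*}\|=1$ gives the reverse bound, since the corresponding vector $\sum a_n e_n^{*}\in\ell_1$ has norm $\sum|a_n|$. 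Hence $Q$ is isometric on finite combinations, $\tilde T$ is nonexpansive there, and $Q\tilde T=TQ$ by construction; consequently any fixed point $\tilde u$ of $\tilde T$ would satisfy $Q\tilde u=T(Q\tilde u)$, producing a fixed point of $T$ in $C$, a contradiction.

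The main obstacle is to make all of this survive the passage from finite convex combinations to the full weak$^{*}$-compact set $\tilde C$, that is, to show that $\tilde T$ is well defined, weak$^{*}$-continuous, nonexpansive, and self-mapping on all of $\tilde C$, and still intertwines with $T$ through $Q$. The difficulty is localized entirely at the weak$^{*}$-limit point $z^{*}$: the quotient $Q$ may strictly contract distances to $z^{*}$, so the isometry of $Q|_{\tilde C}$ proved on finite combinations need not persist in the limit, and one must rule out a fixed point appearing near $z^{*}$. This is exactly where the bad geometry enters — the unit-norm limit $p$ with its infinitely many nonnegative coordinates forces the escaping mass to leave every finite window, so that the intertwining $Q\tilde T=TQ$ and the fixed-point-free conclusion are preserved under the weak$^{*}$ limit, provided the extensions $z_n^{*}$ are chosen coherently. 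Verifying this limiting behavior, rather than the finite-combination algebra, is the technical heart of the argument.
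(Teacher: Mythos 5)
Your proposal contains a genuine gap, and you essentially flag it yourself: everything hinges on what happens at the weak$^*$ limit point $z^*$, and you defer that ("provided the extensions $z_n^*$ are chosen coherently\dots\ is the technical heart of the argument") rather than resolve it. Concretely, two things break. First, you treat the fixed-point-free map $T$ on $C\subset\ell_1$ as a black box, but then assume it is affine, weak$^*$-continuous, and shift-like; the recalled fact (failure of the $\sigma(\ell_1,W_f)$-FPP) gives you none of this structure, and a general nonexpansive map cannot be transported through the quotient $Q=\iota^*$. Second, even granting a shift-like $T$, your set $\tilde C=\overline{\mathrm{conv}}^{\,w^*}\{z_n^*\}$ does not support a well-defined lift: since $z_n^*\to z^*$ weak$^*$, every element of $\tilde C$ has the form $\mu_0 z^* + \sum_n \mu_n z_n^*$, and these representations are in general \emph{not} unique (downstairs, $p=Qz^*$ can itself lie in the convex hull of the $e_n^*$), so "replace each $e_n^*$ by $z_n^*$ and extend weak$^*$-continuously" does not define a map, let alone a nonexpansive self-map intertwining with $T$.

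The idea you are missing is the decomposition the paper borrows from Jap\'on-Prus (and uses in the proof of Theorem \ref{thm sufficient}). One splits the weak$^*$ limit $e^*$ of a suitable subsequence $e^*_{n_j}$ as $e^* = u_0 + w_0$, where $u_0=\sum_j e^*(n_j)e^*_{n_j}$ is supported on the subsequence (with nonnegative coefficients) and $w_0\neq 0$ is supported off it; one then extends to $X^*$ by $\widetilde{u_0}=\sum_j e^*(n_j)x^*_{n_j}$ (with $x^*_{n_j}$ norm-preserving extensions), sets $\widetilde{w}=(x^*-\widetilde{u_0})/\|w_0\|$, and observes that $\|w_0\|+\sum_j e^*(n_j)=1$, so the limit point $x^*$ is \emph{exactly} a convex combination of $\widetilde{w}$ and the $x^*_{n_j}$. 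This extra generator $\widetilde{w}$ is what your $\tilde C$ lacks: with it, the set $D$ generated by $\{x^*,\widetilde{w},x^*_{n_1},x^*_{n_2},\dots\}$ coincides with the countable convex combinations of $\{\widetilde{w}\}\cup\{x^*_{n_j}\}$, whose representations \emph{are} unique (because $w_0$ vanishes on the indices $n_j$, so after applying $Q$ the coefficients can be read off disjoint supports). On $D$ the shift is well defined, nonexpansive by the very norm estimate you use on finite combinations (restriction to $Y^*$ gives the lower bound, the triangle inequality the upper), and manifestly fixed-point-free; no weak$^*$-continuity of the map and no intertwining with a downstairs map are needed, since the construction lives entirely in $X^*$. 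In short, your finite-combination algebra is fine, but the passage to the limit cannot be patched as stated; it requires introducing $\widetilde{w}$ and the exact convex representation of $x^*$, which is the actual content of the paper's proof.
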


The main result of this section aims to provide a sufficient condition for the failure of the $\sigma(X^*,X)$-FPP by reformulating Theorem \ref{Theorem bad}, where bad hyperplane is replaced by an $A(S)$ space. In order to prove it, we need to recall a known result that we quote here for the sake of convenience of the reader. 

\begin{lem}\label{lemmaDS} (\cite{Dunford Schwartz}, p. 441)
	Let $X$ be a closed subspace of $\mathcal{C}(K)$ of all real continuous functions on a compact Hausdorff space $K$. For each $q \in K$ let $x_q^*\in X^*$ be defined by
	$$
	x^*_q(f)=f(q),\quad f\in X.
	$$
	Then every extreme point of the closed unit sphere of $X^*$ is of the form $\pm x^*_q$ with $q \in K$.
\end{lem}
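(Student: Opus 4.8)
The plan is to reduce the statement to the classical description of the extreme points of the unit ball of $\mathcal{C}(K)^*$ by means of a Hahn--Banach extension argument. I would fix an extreme point $\phi$ of the unit ball $B_{X^*}$; necessarily $\|\phi\|=1$. Then I would consider the set
$$
E=\left\{\mu\in B_{\mathcal{C}(K)^*}:\mu|_X=\phi\right\}
$$
of all extensions of $\phi$ lying in the dual ball. By the Hahn--Banach theorem $E$ is nonempty (and, since $\|\phi\|=1$, its members are automatically of norm one), and it is clearly convex and $\sigma(\mathcal{C}(K)^*,\mathcal{C}(K))$-closed, hence weak$^*$-compact as a closed subset of the weak$^*$-compact ball $B_{\mathcal{C}(K)^*}$. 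By the Krein--Milman theorem $E$ admits an extreme point $\mu$.

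The first key step is to show that such a $\mu$ is in fact an extreme point of the whole ball $B_{\mathcal{C}(K)^*}$, and not merely of $E$. To this end I would suppose $\mu=\frac{1}{2}(\mu_1+\mu_2)$ with $\mu_1,\mu_2\in B_{\mathcal{C}(K)^*}$. Restricting to $X$ gives $\phi=\frac{1}{2}(\mu_1|_X+\mu_2|_X)$ with $\mu_1|_X,\mu_2|_X\in B_{X^*}$. Since $\phi$ is an extreme point of $B_{X^*}$, this forces $\mu_1|_X=\mu_2|_X=\phi$, so that $\mu_1,\mu_2\in E$. The extremality of $\mu$ in $E$ then yields $\mu_1=\mu_2$, proving that $\mu$ is extreme in $B_{\mathcal{C}(K)^*}$.

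The second step would invoke the Riesz representation theorem, which identifies $\mathcal{C}(K)^*$ with the space of regular Borel measures on $K$ under the total variation norm, together with the classical fact that the extreme points of the unit ball of this space are exactly the signed Dirac measures $\pm\delta_q$, $q\in K$. Applying this to $\mu$ gives $\mu=\pm\delta_q$ for some $q\in K$. Restricting back to $X$ and recalling the definition of $x_q^*$, I would conclude $\phi=\mu|_X=\pm x_q^*$, as required.

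I expect the main obstacle to be the passage from extremality in the set $E$ of extensions to extremality in the full ball $B_{\mathcal{C}(K)^*}$; the argument hinges on the fact that restriction to $X$ maps $B_{\mathcal{C}(K)^*}$ into $B_{X^*}$ and carries a midpoint decomposition of $\mu$ to one of $\phi$, so that the extremality of $\phi$ can be fed back to force the two extensions to coincide. The remaining ingredients---the Hahn--Banach extension, the weak$^*$-compactness of $E$, Krein--Milman, and the identification of the extreme points of the measure ball---are standard.
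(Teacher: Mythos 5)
Your proposal is correct, and it is essentially the argument behind the result as it appears in the cited source: the paper itself gives no proof of this lemma (it is quoted from Dunford--Schwartz), and the standard proof there proceeds exactly as you do, by noting that the set of norm-preserving Hahn--Banach extensions of an extreme point $\phi\in B_{X^*}$ is a nonempty, convex, weak$^*$-compact fiber of the restriction map, that any extreme point of this fiber is already extreme in $B_{\mathcal{C}(K)^*}$, and then invoking the Arens--Kelley description of the extreme points of $B_{\mathcal{C}(K)^*}$ as the functionals $\pm\delta_q$. All the steps you flag (weak$^*$-compactness of the extension set, the midpoint argument passing extremality from $E$ up to the full ball, Krein--Milman) are carried out correctly, so there is no gap.
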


\begin{thm}\label{thm sufficient}
	Let $X$ be a separable Banach space. If $X$ contains an isometric copy of some infinite-dimensional $A(S)$ space, then $X^*$ lacks the $\sigma(X^*,X)$-FPP. 
\end{thm}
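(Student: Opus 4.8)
The plan is to reduce this theorem to the previously established Theorem~\ref{Theorem bad}, which handles the case of a bad hyperplane $W_f$ in $c$. So the central goal is: \emph{given an infinite-dimensional $A(S)$ space sitting isometrically inside $X$, produce inside $X$ an isometric copy of a bad hyperplane}, after which Theorem~\ref{Theorem bad} finishes the argument immediately. First I would use the structure of $A(S)$. Since $S$ is a Choquet simplex, $A(S)$ is a closed subspace of $\mathcal{C}(S)$, and by Lemma~\ref{lemmaDS} the extreme points of $B_{A(S)^*}$ are exactly the evaluation functionals $\pm x_q^*$ for $q \in S$; moreover the constant function $\mathbbm{1}$ lies in $A(S)$, which is the feature distinguishing $A(S)$ spaces among Lindenstrauss spaces (it is why $B_{A(S)}$ has an extreme point). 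The presence of $\mathbbm{1}$ together with the affine structure is what I expect to drive the construction.

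Next I would build a sequence that realizes the bad-hyperplane geometry. Using separability and infinite-dimensionality, I would select a sequence of extreme points $x_{q_n}^*$ of $B_{A(S)^*}$ that converges weak$^*$ to a point, and arrange (passing to a subsequence, exploiting that $A(S)$ contains the constants so that evaluation at the barycenter or at a limiting point behaves like the coefficient $f(1)=\tfrac12$) an alternating-sign pattern among the evaluations that forces the set $N^+$ to be infinite. Concretely, I would try to find a norm-one $f \in \ell_1$ with $|f(1)|=\tfrac12$ and infinitely many sign changes, and an isometric embedding of the corresponding $W_f$ into $A(S) \subseteq X$, by dualizing the weak$^*$-convergent sequence of evaluation functionals into a coordinate description matching the duality~\eqref{dualityW} and the limit relation~$(\heartsuit)$. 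The constant function provides the coordinate carrying weight $\tfrac12$, and the affine/simplex structure guarantees that the remaining evaluations can be normalized to supply the alternating tail.

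The main obstacle, I expect, is verifying that the embedding is a genuine \emph{isometry} onto a bad hyperplane rather than merely an isomorphism: one must show that the selected functionals generate the correct dual action so that the induced norm on the image coincides with the hyperplane norm, and simultaneously that $|f(1)| = \tfrac12$ exactly (not just $\geq \tfrac12$, which by property~(1) would only give $W_f^* \cong \ell_1$ without the ``bad'' sign condition) while keeping $|f(j)| < \tfrac12$ for $j \geq 2$. Controlling these coefficients precisely, using the extremality from Lemma~\ref{lemmaDS} and the weak$^*$ limit of the evaluation functionals, is the delicate quantitative step. Once the bad hyperplane $W_f$ is produced as an isometric subspace of $X$, Theorem~\ref{Theorem bad} yields that $X^*$ fails the $\sigma(X^*,X)$-FPP, completing the proof.
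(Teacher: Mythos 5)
Your overall strategy --- reduce to Theorem \ref{Theorem bad} by producing an isometric copy of a bad hyperplane inside $A(S)$ --- is not the paper's proof, but it is a recognized route: it is precisely the alternative argument sketched in the remark following Theorem \ref{thm sufficient}, where the needed embedding is obtained from the proof of Theorem 1 in \cite{Zippin2018} (every infinite-dimensional $A(S)$ space contains an $\ell_1$-predual subspace isometric to a hyperplane of $c$ containing the point $(1,1,1,\dots)$, and such a hyperplane is easily checked to be bad). The trouble is that your proposal neither invokes Zippin's theorem nor actually carries out the construction that would replace it. The entire content of this approach is concentrated in the step you defer --- finding a norm-one $f\in\ell_1$ with $|f(1)|=\tfrac{1}{2}$, the right sign pattern, and an \emph{isometric} embedding of the corresponding $W_f$ into $A(S)$ --- and you describe it only as a ``delicate quantitative step'' you ``would try'' to control. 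Identifying the obstacle is not the same as overcoming it; as the paper itself notes, the only known way to obtain this embedding is a deep technique of Zippin, so this is a genuine gap, not a routine verification left to the reader.

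Moreover, the sketch you give of that step points in the wrong direction. Dualizing a weak$^*$-convergent sequence of evaluation functionals $x_{q_n}^*$ yields the coordinate map $a\mapsto (x_{q_n}^*(a))_n$, i.e., (when the chosen functionals are norming) an isometric embedding of $A(S)$ \emph{into} a sequence space such as $\ell_\infty$ or $c$ --- not an embedding of a hyperplane $W_f$ \emph{into} $A(S)$, which is what Theorem \ref{Theorem bad} requires. Worse, after passing to a weak$^*$-convergent subsequence of evaluations (needed for the image to lie in $c$), there is no reason the remaining functionals are still norming, nor that the image of $A(S)$ is a hyperplane of $c$ rather than an arbitrary closed subspace; your sketch also tacitly assumes $A(S)^*=\ell_1$, so the nonseparable-dual case is untouched. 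By contrast, the paper's actual proof never constructs such an embedding: after disposing of the nonseparable-dual case (via Theorem 2.3 of \cite{Lazar-Lindenstrauss1971} and Theorem \ref{Theorem bad}), it uses only that $\mathbbm{1}\in A(S)$ together with Lemma \ref{lemmaDS} forces $\pi(\mathbbm{1})$ to be a sequence of signs, extracts a $\sigma(\ell_1,A(S))$-convergent subsequence $e_{n_j}^*$ of the basis whose limit $e^*$ has norm one and $e^*(n_j)\geq 0$, and then builds directly in $X^*$ the weak$^*$-compact convex set $D$ and the fixed-point-free nonexpansive map $T$, following \cite{Japon-Prus2004} and Theorem 3.7 of \cite{Casini-Miglierina-Piasecki2015}. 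If you want to keep your reduction to Theorem \ref{Theorem bad}, you must either cite Zippin's result for the key embedding or supply that construction in full; as written, the proof is incomplete at its central point.
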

\begin{proof}
	We start the proof by considering the case when $ A(S)^*$ is nonseparable and hence $X^*$ is nonseparable. By Theorem 2.3 in \cite{Lazar-Lindenstrauss1971} every separable Lindenstrauss space $X$ with nonseparable dual contains a subspace isometric to the space $\mathcal{C}(\Delta)$, where $\Delta$ is the Cantor set. Since $\mathcal{C}(\Delta)$ contains an isometric copy of $c$, by Theorem \ref{Theorem bad}, $X^*$ lacks the $\sigma(X^*,X)$-FPP. 
	
	Therefore, it remains to consider the case when $A(S)^*=\ell_1$. 
	Let $\mathbbm{1}$ denote the identically equal $1$ function defined on $S$. Let $\pi$ denote the canonical embedding of $A(S)$ into $ A(S)^{**}=\ell_{\infty}$. Since $A(S)\subset \mathcal{C}(S)$, by Lemma \ref{lemmaDS} there exists a sequence of signs $(\varepsilon(n))_{n\in \mathbb{N}}$, $\varepsilon(n) = \pm 1$ for all $n\in \mathbb{N}$ such that $\pi(\mathbbm{1})=(\varepsilon(1),\varepsilon(2),\varepsilon(3),\dots)$. Without loss of generality, we may assume that there is a subsequence $\left\lbrace \varepsilon(n_j)\right\rbrace_{j \in \mathbb{N}}$ of $\left\lbrace \varepsilon(n)\right\rbrace_{n \in \mathbb{N}}$ such that $\varepsilon(n_j)=1$ for each $j \in \mathbb{N}$. 
	Let us consider the set 
	$$
	C=\left\lbrace f\in \ell_1: \pi(\mathbbm{1}) (f)=1\right\rbrace \cap B_{\ell_1}= \left\lbrace f=\left( f(1),f(2),\dots\right) \in B_{\ell_1}: \sum_{i=1}^{\infty}\varepsilon(i)f(i)=1\right\rbrace. 
	$$
	It is easy to see that $C$ is a nonempty, convex and $\sigma(\ell_1,A(S))$-compact subset of $\ell_1$. Now, by choosing a subsequence, we may assume that $\left\lbrace e^*_{n_j}\right\rbrace_{j \in \mathbb{N}}$ is $\sigma(\ell_1,A(S))$-convergent to some $e^*$. Since $e^* \in C$, we have $\left\| e^*\right\|=1$ and $e^*(n_j)\geq 0$ for every $j \in \mathbb{N}$. 
	
	From now on, the proof follows the approach already used in the proof of Theorem 3.7 in \cite{Casini-Miglierina-Piasecki2015}. For the convenience of the reader we repeat here the relevant part of that proof.	
	Again, by choosing a subsequence, we may assume that 	
	$w_{0}=e^{*}-u_{0}\neq0$ where
	$u_{0}=\sum_{j=1}^{+\infty}e^{*}(n_{j})e_{n_{j}}^{*}$. Let $x_{n_j}^*$ be a norm-preserving extension of $e_{n_j}^*$ to the whole space $X$. Now we
	consider the extension of $u_{0}$ to the whole space $X$ defined by
	$\widetilde{{u}_{0}}=\sum_{j=1}^{\infty}e^{*}(n_{j})x_{n_{j}}^{*}$
	and the elements $\widetilde{{w}_{0}}=x^{*}-\widetilde{{u}_{0}}$ and
	$\widetilde{w}=\frac{\widetilde{{w}_{0}}}{\left\Vert
		w_{0}\right\Vert }$. Now, by adapting to our framework the approach
	developed in the last part of the proof of Theorem 8 in
	\cite{Japon-Prus2004}, we show that the $\sigma
	(X^*,X)$-compact, convex set
	\[
	D=\left\{
	\mu_{1}x^{*}+\mu_{2}\widetilde{w}+\sum_{j=1}^{\infty}\mu_{j+2}x_{n_{j}}^{*}:\,\sum_{k=1}^{\infty}\mu_{k}=1,\,\mu_{k}\geq0,\,
	k=1,2,\dots\right\}
	\]
	is equal to
	\[
	D=\left\{
	\lambda_{1}\widetilde{w}+\sum_{j=1}^{\infty}\lambda_{j+1}x_{n_{j}}^{*}:\,\sum_{k=1}^{\infty}\lambda_{k}=1,\,\lambda_{k}\geq0,\,
	k=1,2,\dots\right\} .
	\]
	Next, we consider the map $T:D\rightarrow D$ defined by:
	\[
	T\left(\lambda_{1}\widetilde{w}+\sum_{j=1}^{\infty}\lambda_{j+1}x_{n_{j}}^{*}\right)=\sum_{j=1}^{\infty}\lambda_{j}x_{n_{j}}^{*}.
	\]
	Since
	$x=\lambda_{1}\widetilde{w}+\sum_{j=1}^{\infty}\lambda_{j+1}x_{n_{j}}^{*}\in
	D$ has a unique representation, the map $T$ is well defined. Moreover
	it is a nonexpansive map. Indeed, for every
	$\alpha=(\alpha_{1},\alpha_{2},\dots)$ such that $\sum_{j=1}^{\infty}\left|  \alpha_j \right|< \infty$, it holds
	\[
	\left\Vert
	\alpha_{1}\widetilde{w}+\sum_{j=1}^{\infty}\alpha_{j+1}x_{n_{j}}^{*}\right\Vert
	\geq\left\Vert \alpha_{1}\frac{w_{0}}{\left\Vert w_{0}\right\Vert
	}+\sum_{j=1}^{\infty}\alpha_{j+1}e_{n_{j}}^{*}\right\Vert
	=\sum_{j=1}^{\infty}\left|\alpha_{j}\right|
	\]
	\[
	=\sum_{j=1}^{\infty}\left|\alpha_{j}\right|\left\Vert
	x_{n_{j}}^{*}\right\Vert \geq\left\Vert
	\sum_{j=1}^{\infty}\alpha_{j}x_{n_{j}}^{*}\right\Vert .
	\]
	Finally, it is easy to see that $T$ has no fixed point in
	$D$.   
\end{proof}

\begin{rem}
	Theorem \ref{thm sufficient} can be proved in a completely different way. Indeed, from the proof of Theorem 1 in \cite{Zippin2018}, we know that every $A(S)$ space contains an $\ell_1$-predual subspace that is isometric to a hyperplane in $c$ containing the point $(1,1,1,\dots)\in c$. It is easy to observe that such a hyperplane is bad. Therefore, by applying Theorem \ref{Theorem bad}, we conclude that $X^*$ lacks the $\sigma(X^*,X)$-FPP. This alternative proof relies on a deep technique developed by Zippin, whereas our approach is direct and self-contained. One may conjecture that each $A(S)$ space contains an isometric copy of the whole space $c$, which is the simplest example of bad hyperplane. However, it occurs that there exists an infinite-dimensional $A(S)$ space that does not contain an isometric copy of $c$ (see \cite{Casini-Miglierina-Piasecki-Vesely}).
\end{rem}

\begin{rem}\label{rem quotient}
	Let $X$ be a separable Banach space. Suppose that some infinite-dimensional $A(S)$ space is a subspace of a quotient $X/Y$ of $X$. Then Theorem \ref{thm sufficient} shows that $Y^{\bot}$ fails the $\sigma(Y^\bot,X/Y)$-FPP. It follows easily that also $X^*$ lacks the $\sigma(X^*,X)$-FPP.
\end{rem}

\section{Weak$^*$ fixed point property in the dual of separable Lindenstrauss space }

In this section we show that the sufficient condition stated in Remark \ref{rem quotient} is equivalent to the failure of the $\sigma(X^*,X)$-FPP whenever we consider a separable Lindenstrauss space $X$. Moreover, our result is linked to the characterization obtained in \cite{Casini-Miglierina-Piasecki2015}. Indeed, 
we applied the above mentioned notion of bad hyperplane  to state the following equivalence.
\begin{thm} [Theorem 4.1 in \cite{Casini-Miglierina-Piasecki2015}]
	Let $X$ be a predual of $\ell_1$. Then the following are equivalent:
	\begin{enumerate}
		\item $\ell_1$ lacks the $\sigma(\ell_1,X)$-FPP for nonexpansive mappings;
		
		\item\label{item quotient isometric} there is a quotient of $X$ isometric to a bad $W_f$;
		
		\item \label{item quotient contained}there is a quotient of $X$ that contains a subspace isometric to a bad $W_g$.		
	\end{enumerate}
\end{thm}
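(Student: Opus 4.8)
The plan is to prove the cycle $(2)\Rightarrow(3)\Rightarrow(1)\Rightarrow(2)$. Note first that $X$ is separable, being the predual of the separable space $\ell_1$, so that $X$ and all of its quotients fall within the scope of Theorem \ref{Theorem bad} and Theorem \ref{thm sufficient}. The implication $(2)\Rightarrow(3)$ is immediate, since an isometric copy of a bad $W_f$ is a fortiori a subspace isometric to a bad hyperplane. For $(3)\Rightarrow(1)$ I would argue as in Remark \ref{rem quotient}: if a quotient $X/Y$ contains a subspace isometric to a bad $W_g$, then, $X/Y$ being separable, Theorem \ref{Theorem bad} yields that $(X/Y)^*$ lacks the $\sigma((X/Y)^*,X/Y)$-FPP. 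Identifying $(X/Y)^*$ isometrically with the annihilator $Y^{\bot}\subseteq X^*=\ell_1$, one checks that $\sigma(Y^{\bot},X/Y)$ is exactly the restriction to the weak$^*$-closed set $Y^{\bot}$ of $\sigma(\ell_1,X)$; hence a $\sigma(Y^{\bot},X/Y)$-compact convex set carrying a fixed-point-free nonexpansive self-map is a $\sigma(\ell_1,X)$-compact convex subset of $\ell_1$ supporting the same map, and $(1)$ follows.

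The core of the theorem is $(1)\Rightarrow(2)$, which I would split into an extraction step and a construction step. For the extraction, starting from a $\sigma(\ell_1,X)$-compact convex set $C$ and a fixed-point-free nonexpansive $T\colon C\to C$, I would pass by Zorn's lemma to a minimal $T$-invariant convex set of positive diameter together with an approximate fixed point sequence on it. Exploiting the Schur property of $\ell_1$ and the weak$^*$ metrizability of bounded sets (guaranteed by separability of $X$), a Goebel--Karlovitz-type analysis should then isolate a subsequence $(e^*_{n_k})$ of the canonical basis of $\ell_1$ that is $\sigma(\ell_1,X)$-convergent to some $e^*$ with $\|e^*\|=1$ and with coordinates changing sign infinitely often. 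The retention of the full mass in the weak$^*$ limit is the trace, at the level of the limit functional, of the failure of the FPP, while the sign alternation records the oscillation that blocks convergence to a fixed point (compare the configuration built in the proof of Theorem \ref{thm sufficient}).

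For the construction step, I would read off from $e^*$ a norm-one functional $f\in\ell_1$ whose leading coordinate absorbs the retained mass: by the computation $\|e^*\|=(1-|f(1)|)/|f(1)|$ this forces $|f(1)|=\frac{1}{2}$, while the remaining coordinates inherit the infinitely many sign changes of $e^*$, so that the index set $N^{+}$ is infinite and $W_f$ is bad. Using the dual action \eqref{dualityW} and the weak$^*$ convergence ($\heartsuit$), one then verifies that the predual of the weak$^*$-closed subspace determined by the extracted block is isometric to $W_f$, that is, that $X/Y\cong W_f$ for the corresponding $Y$; this delivers $(2)$ and closes the cycle.

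The main obstacle is the extraction step. Converting the purely metric failure of the fixed point property into the rigid combinatorial configuration of a bad hyperplane --- a weak$^*$ limit of \emph{basis} vectors of norm exactly one with infinitely many sign changes, rather than some partial escape of mass --- is where the real difficulty lies; in particular, reducing the abstract asymptotically isometric $\ell_1$-sequence produced by the minimal-invariant-set machinery to an honest subsequence of the canonical basis, so that the hyperplane can actually be named, is the delicate point. Once this configuration is secured, the passage to the quotient and the verification of badness through \eqref{dualityW} and ($\heartsuit$) are comparatively routine.
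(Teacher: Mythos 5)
Your cycle has the right shape, and two of its three legs are sound: (2)$\Rightarrow$(3) is indeed immediate, and your (3)$\Rightarrow$(1) is precisely the argument of Remark \ref{rem quotient} (the dual of $X/Y$ is the annihilator $Y^{\bot}$ with the restricted norm, the weak$^*$ topologies match, so compactness and nonexpansiveness transfer) combined with Theorem \ref{Theorem bad}. Note that this paper states the theorem without proof --- it is recalled from \cite{Casini-Miglierina-Piasecki2015} --- and the only part of the original argument visible here is in the proof of Theorem \ref{Characterization}, which invokes two intermediate conditions of the original Theorem 4.1: (4) there is a subsequence $(e^*_{n_k})$ of the standard basis which is $\sigma(\ell_1,X)$-convergent to a norm-one element $e^*\in\ell_1$ with $e^*(n_k)\geq 0$ for all $k$; and (5) $X$ has a quotient isometric to an $\ell_1$-predual hyperplane $W_f$ containing the point $(1,1,1,\dots)\in c$.

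The genuine gap is in your (1)$\Rightarrow$(2), in two respects. First, the extraction step is the entire content of the theorem, and you do not prove it: saying that ``a Goebel--Karlovitz-type analysis should then isolate'' the desired subsequence is a plan, not an argument, and you acknowledge as much in your last paragraph. Second, and more concretely, the extraction target you state is wrong: you ask for a norm-one weak$^*$ limit $e^*$ ``with coordinates changing sign infinitely often,'' but no such extraction is possible in general. Take $X=c$, a predual for which (1) certainly holds ($c$ is itself isometric to a bad hyperplane): every subsequence of $(e^*_n)$ is $\sigma(\ell_1,c)$-convergent to the functional $x\mapsto \lim_n x(n)$, that is, to $(1,0,0,\dots)\in\ell_1$, which has no sign changes whatsoever. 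The correct, and provable, condition is the nonnegativity in (4) above. Badness of the resulting hyperplane then falls out of ($\heartsuit$): matching the limit coordinates forces $f(k+1)=-f(1)\,e^*(n_k)$ (with one extra block in the quotient's dual if $e^*$ has mass outside the indices $n_k$), hence $f(1)f(k+1)\leq 0$ for \emph{every} $k$ and $N^+$ is infinite. It is the weak inequality $f(1)f(n+1)\leq 0$ in the definition of $N^+$, fed by nonnegativity of the limit, and not any oscillation of signs, that makes $W_f$ bad. Your norm computation $\|e^*\|=(1-|f(1)|)/|f(1)|=1$ forcing $|f(1)|=\frac{1}{2}$ is correct, but it sits on top of an extraction that, as stated, cannot be carried out.
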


In the following result we replace bad $W_f$ by the space $A(S)$ of affine continuous functions on the Choquet simplex $S \!$. One can easily observe that there are bad $W_f$ which are not $A(S)$ spaces since their unit balls have no extreme points.

%
%
%
%

\begin{thm}\label{Characterization}	
	Let $X$ be a predual of $\ell_1$. The following statements are equivalent:
	\begin{enumerate}
		\item \label{item noFPP} $\ell_1$ lacks the $\sigma(\ell_1,X)$-FPP for nonexpansive mappings;
		
		\item \label{item affine}there is a quotient of $X$ isometric to some $A(S)$ space;
		
		\item \label{item containement}there is a quotient of $X$ containing an isometric copy of some $A(S)$ space.

	\end{enumerate}
\end{thm}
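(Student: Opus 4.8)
The plan is to close the cycle $(\ref{item affine})\Rightarrow(\ref{item containement})\Rightarrow(\ref{item noFPP})\Rightarrow(\ref{item affine})$. The first implication is immediate, since a space is a subspace of itself. For $(\ref{item containement})\Rightarrow(\ref{item noFPP})$ note that, as $X^{*}=\ell_{1}$ is separable, $X$ is separable as well; hence Remark \ref{rem quotient} applies verbatim and yields that $\ell_{1}=X^{*}$ lacks the $\sigma(\ell_{1},X)$-FPP. Thus the entire content of the theorem is the remaining implication $(\ref{item noFPP})\Rightarrow(\ref{item affine})$.

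For this implication I would first invoke Theorem 4.1 of \cite{Casini-Miglierina-Piasecki2015}, quoted above: the failure of the $\sigma(\ell_{1},X)$-FPP produces a subspace $Y\subseteq X$ with $X/Y$ isometric to a bad hyperplane $W_{f}$. Since a quotient of a quotient is again a quotient of $X$, it suffices to produce \emph{inside} $W_{f}$ a single quotient isometric to an infinite-dimensional $A(S)$ space. So the task reduces to the concrete statement: every bad $W_{f}$ has a quotient isometric to some infinite-dimensional $A(S)$ space.

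I would prove this reduced statement by cases. If $|f(j_{0})|\geq 1/2$ for some $j_{0}\geq 2$, then by property (2) recalled in the introduction $W_{f}$ is already isometric to $c$, which is a $\mathcal{C}(K)$-space and hence an $A(S)$ space; the trivial quotient settles this case. In the complementary case $|f(j)|<1/2$ for all $j\geq 2$ (with $|f(1)|=1/2$ by badness), formula $(\heartsuit)$ applies: normalising so that $f(1)=1/2$, one has $e^{*}_{n}\xrightarrow{\sigma(\ell_{1},W_{f})} u$, where $u=(-2f(2),-2f(3),\dots)$ satisfies $\|u\|_{1}=1$, and badness says exactly that $P=\{k:u_{k}\geq 0\}$ is infinite. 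I would then take $Z=\{x\in W_{f}:x(k)=0 \text{ for all }k\in P\}$ and set $V=W_{f}/Z$. Splitting $u=u^{P}+u^{P^{c}}$ across $P$ and its complement, a direct annihilator computation (identifying $Z$ with a hyperplane of $c_{0}(P^{c})$) gives
\[
V^{*}=Z^{\perp}=\ell_{1}(P)\oplus_{1}\mathbb{R}\,u^{P^{c}}\cong\ell_{1},
\]
the sum being an $\ell_{1}$-sum because $P$ and $P^{c}$ are disjoint. In particular $V$ is an infinite-dimensional $L_{1}$-predual.

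It then remains to exhibit an extreme point of $B_{V}$, for then Semadeni's theorem \cite{Semadeni1964} identifies $V$ with an $A(S)$ space and finishes the proof. The natural candidate is the element $\mathbbm{1}$ whose coordinates in the basis $\{e^{*}_{k}:k\in P\}\cup\{u^{P^{c}}/\|u^{P^{c}}\|_{1}\}$ of $V^{**}$ are all equal to $1$; it is the class of any $x\in W_{f}$ with $x|_{P}\equiv 1$, and for any such $x$ the $W_{f}$-constraint forces the remaining ($u^{P^{c}}$-)coordinate of the class to be $1$ as well, so the class equals $\mathbbm{1}$ independently of the choice of $x$. As a $\pm 1$-valued element of $B_{V^{**}}=B_{\ell_{\infty}}$ that lies in $V$, it is an extreme point of $B_{V}$. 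I expect the main obstacle to be precisely the verification that $\mathbbm{1}\in B_{V}$, i.e. that its quotient norm equals $1$ rather than exceeding it: this amounts to an optimisation showing $\inf\{\|x\|_{\infty}:x\in W_{f},\ x|_{P}\equiv 1\}=1$, an infimum that is approached but, when $P^{c}$ is infinite, need not be attained, so some care is needed to conclude that $\mathbbm{1}$ is a genuine norm-one element of the quotient and not merely of its bidual.
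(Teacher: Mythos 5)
Your proposal is correct, and for the only substantial implication, (\ref{item noFPP})$\Rightarrow$(\ref{item affine}), it takes a genuinely different route from the paper. The paper also starts from Theorem 4.1 of \cite{Casini-Miglierina-Piasecki2015}, but it uses the implication (1)$\Rightarrow$(4) there together with the \emph{proof} (not just the statement) of (4)$\Rightarrow$(5), which yields a quotient of $X$ isometric to an $\ell_1$-predual hyperplane $W_f$ containing the point $(1,1,1,\dots)\in c$; that hyperplane is then shown to be \emph{itself} isometric to $A(S)$, where $S$ is the positive face of the unit sphere of $\ell_1=W_f^*$, whose $\sigma(\ell_1,W_f)$-compactness follows from Corollary 2 of \cite{Japon-Prus2004} together with $(\heartsuit)$. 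You instead use only the stated implication (1)$\Rightarrow$(2) of Theorem 4.1 (a quotient isometric to a bad $W_f$), and then manufacture a \emph{further} quotient $V=W_f/Z$, shown to be an $\ell_1$-predual whose ball has an extreme point, finishing with Semadeni's criterion. Your version is more self-contained relative to the quoted results (it needs neither the internals of the earlier proof nor the Jap\'on--Prus compactness argument), at the cost of the annihilator computation and one extra quotient; the paper's version is shorter and exhibits the simplex $S$ concretely. The other two implications are handled identically in both proofs.

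The step you flag as the ``main obstacle'' --- that the class of $x$ (where $x|_P\equiv1$) has quotient norm exactly $1$ --- is in fact no obstacle at all, and no optimisation over representatives, let alone attainment of the infimum, is needed. Since $(W_f/Z)^*=Z^\perp$ isometrically, the quotient norm of the class $[x]$ equals $\sup\{|g(x)|:g\in B_{Z^\perp}\}$, and by your own decomposition of $Z^\perp$ every $g=g_P+\lambda u^{P^c}$ with $\|g_P\|_1+|\lambda|\,\|u^{P^c}\|_1\le 1$ satisfies
\[
|g(x)|\le\|g_P\|_1\sup_{k\in P}|x(k)|+|\lambda|\,\bigl|u^{P^c}(x)\bigr|=\|g_P\|_1+|\lambda|\,\|u^{P^c}\|_1\le 1,
\]
using your computation $u^{P^c}(x)=\|u^{P^c}\|_1$; testing against $e_k^*$, $k\in P$, gives the reverse inequality. (Equivalently: the canonical embedding of $V$ into $V^{**}$ is an isometry, so $\|[x]\|_V=\|\mathbbm{1}\|_{\ell_\infty}=1$, whether or not a norm-one representative exists in $W_f$.) The only point you use tacitly that deserves a line is the existence of some $x\in W_f$ with $x|_P\equiv1$: take $x\equiv1$ on $P$ and on $P^c$ except at one index $k_0\in P^c$, then solve the single linear equation $\sum_{k\in P^c}|u(k)|\left(x(k)+1\right)=0$ for $x(k_0)$, which is possible since $u(k_0)\neq0$ on $P^c$; the resulting sequence still converges to $1$, hence lies in $c$. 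With these two remarks inserted, your proof is complete.
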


\begin{proof}
	We start by proving that (\ref{item noFPP}) implies (\ref{item affine}). From the implication (1) $\Rightarrow$ (4) of Theorem 4.1 in \cite{Casini-Miglierina-Piasecki2015}, we obtain that there is a subsequence $(e_{n_k}^*)_{k \in
		\mathbb{N}}$ of the standard basis $(e_{n}^*)_{n \in \mathbb{N}}$ in
	$\ell_1$ which is $\sigma(\ell_1,X)$-convergent to a norm-one
	element $e^* \in \ell _1$ with $e^*(n_k)\geq 0$ for all $k\in
	\mathbb{N}$.
	From the proof of the implication (4) $\Rightarrow$ (5) of Theorem 4.1 in \cite{Casini-Miglierina-Piasecki2015}, we know that $X$ has a quotient isometric to an $\ell_1$-predual hyperplane $W_f$ containing the point $(1,1,1,\dots)\in c$. By applying Corollary 2 in \cite{Japon-Prus2004} and ($\heartsuit$) one can prove that the positive face $S$ of the unit sphere of $\ell_1=W_f^*$ is $\sigma(\ell_1,W_f)$-compact and $W_f$ is isometric to $A(S)$.

	The implication (\ref{item affine}) $\Rightarrow$ (\ref{item containement}) is trivial.
	
	
	Finally, by applying Remark \ref{rem quotient}, we conclude that (\ref{item containement}) $\Rightarrow$ (\ref{item noFPP}). 
\end{proof}

The following example shows that the quotient in conditions (2) and (3) in Theorem \ref{Characterization} cannot be removed in a sense that these conditions can be replaced by: $X$ has a subspace isometric to an infinite-dimensional $A(S)$ space.	  

\begin{example}\label{Example Hyperplane}
	Let $f=(1/2,-1/4,1/8,-1/16,\dots) \in \ell_1$. Since $W_f$ is a bad hyperplane, $\ell_1$ fails the $\sigma(\ell_1,W_f)$-FPP. Moreover, $W_f$ does not have a quotient containing an isometric copy of $c$ (see Example 2.4 in \cite{Casini-Miglierina-Piasecki2015}).

	We claim that the hyperplane $W_f$ does not contain any infinite-dimensional $A(S)$ space. By contradiction, suppose that $A(S) \subset W_f$. Let $\left\lbrace e^*_n \right\rbrace$ be the standard basis in $\ell_1=A(S)^*$. Since $A(S) \subset \mathcal{C}(S)$, by Lemma \ref{lemmaDS}, there exists a sequence of signs $(\varepsilon(n))_{n\in \mathbb{N}}$, $\varepsilon(n) = \pm 1$ for all $n \in \mathbb{N}$, such that $e^*_n (\mathbbm{1})=\varepsilon(n)$, where $\mathbbm{1}$ denotes the constant function equal to $1$ on $S$. Let $\widetilde{e^*_n}$ denote the norm-preserving extension of $e^*_n$ to the whole $W_f$. Then
	$$
	2=\left\| e^*_n \pm e^*_m\right\| \leq \left\| \widetilde{e^*_n} \pm \widetilde{e^*_m}\right\|\leq 2.
	$$
	These relations mean that $\left\lbrace \widetilde{e^*_n}\right\rbrace $ is represented in $\ell_1=W_f^*\,$ by a sequence of disjoint blocks of norm $1$. Moreover, for every $n \in \mathbb{N}$, it holds $\widetilde{e^*_n}(\mathbbm{1})=e^*_n(\mathbbm{1})=\varepsilon(n)$. This shows that $\mathbbm{1}$ is represented in $W_f$ by $x=\left(x(1),x(2), \dots \right) \in S_{W_f}$ such that for every $n \in \mathbb{N}$ we have
	$$
	x(i)=\textrm{sgn}\, \widetilde{e^*_n}(i)\quad 
	$$
	if $i \in \textrm{supp}\,\widetilde{e^*_n} :=\left\lbrace i \in \mathbb{N}:\widetilde{e^*_n}(i) \neq 0 \right\rbrace$. Since $x\in B_c$, we have $\lim_{n \rightarrow \infty}x(n)=1$ or $\lim_{n \rightarrow \infty}x(n)=-1$. However, there is no such $x\in W_f$.

\end{example}

Theorem \ref{Characterization} can be easily extended from the case of $\ell_1$-preduals to the whole class of separable $L_1$-preduals. 

\begin{thm} \label{thm separable Lindenstrauss} Let $X$ be a separable Lindenstrauss space. The following statements are equivalent:
	\begin{enumerate}
		\item $X^*$ lacks the $\sigma(X^*,X)$-FPP for nonexpansive mappings;
		
		\item there is a quotient of $X$ isometric to some $A(S)$ space;
		
		\item there is a quotient of $X$ containing an isometric copy of some $A(S)$ space.

	\end{enumerate}
\end{thm}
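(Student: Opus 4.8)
The plan is to reduce the general separable Lindenstrauss case to the $\ell_1$-predual case already settled in Theorem \ref{Characterization}, by splitting on the separability of the dual $X^*$. First I would observe that a separable Lindenstrauss space has $X^*$ isometric to $L_1(\mu)$ for some measure $\mu$, and that $L_1(\mu)$ is separable precisely when $X^*$ is separable; in the separable-dual case $X^*$ is isometric to $\ell_1$, so $X$ is a predux of $\ell_1$ and Theorem \ref{Characterization} applies verbatim, noting that $\sigma(X^*,X)$-FPP for $X^*$ is exactly the $\sigma(\ell_1,X)$-FPP in that statement. Thus for separable-dual $X$ the three conditions are equivalent immediately.

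The remaining work is the case where $X^*$ is \emph{nonseparable}. Here I would argue that $X^*$ automatically fails $\sigma(X^*,X)$-FPP, so condition (1) holds unconditionally, and I would independently produce the quotient required in (2). For the failure of the fixed point property, I would invoke the same structural fact used in the proof of Theorem \ref{thm sufficient}: by Theorem 2.3 in \cite{Lazar-Lindenstrauss1971}, a separable Lindenstrauss space with nonseparable dual contains a subspace isometric to $\mathcal{C}(\Delta)$, which in turn contains an isometric copy of $c$; since $c$ is (isometric to) a bad hyperplane, Theorem \ref{Theorem bad} forces $X^*$ to lack $\sigma(X^*,X)$-FPP. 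So (1) is free in the nonseparable case.

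To close the equivalence in the nonseparable case I then need to exhibit a quotient of $X$ that is isometric to (or contains a copy of) some infinite-dimensional $A(S)$. The cleanest route is to use $\mathcal{C}(\Delta)$ itself: the space $\mathcal{C}(\Delta)$ is a $\mathcal{C}(K)$ space, hence an $A(S)$ space (its unit ball has extreme points, namely the functions $\pm\mathbbm{1}$), so it suffices to realize $\mathcal{C}(\Delta)$, or any infinite-dimensional $A(S)$, as a quotient of $X$. Since the Lazar--Lindenstrauss theorem gives $\mathcal{C}(\Delta)$ as a \emph{subspace} of $X$, I would pass to the annihilator/quotient duality: a subspace isometric to $\mathcal{C}(\Delta)$ together with the structure of $L_1$-preduals lets one produce a norm-one quotient map onto $A(S)$, or alternatively one simply cites that (3) is the weakest of the three conditions and derives (3) from the presence of the $\mathcal{C}(\Delta)\supset c$ copy combined with Remark \ref{rem quotient}.

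The main obstacle is precisely this last step: obtaining a genuine \emph{quotient} (not merely a subspace) isometric to or containing an $A(S)$ space in the nonseparable-dual case. A subspace of $X$ does not automatically give a quotient of $X$ with the same isometric type, so I expect the real content to be an argument — likely via the metric-projection/$L$-projection structure available in Lindenstrauss spaces, or via the interplay between $\mathcal{C}(\Delta)\subset X$ and the decomposition of $X^*=L_1(\mu)$ — that upgrades the embedded copy of $c$ (or of $\mathcal{C}(\Delta)$) into a quotient exhibiting the desired $A(S)$. If a direct quotient construction is awkward, the fallback is to run the three implications cyclically exactly as in Theorem \ref{Characterization}, invoking Remark \ref{rem quotient} for the implication from (3) back to (1), so that only (1)$\Rightarrow$(2) genuinely requires the nonseparable structural input above.
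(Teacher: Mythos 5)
Your reduction to Theorem \ref{Characterization} in the separable-dual case is exactly right, and your argument that condition (1) holds automatically when $X^*$ is nonseparable (Lazar--Lindenstrauss gives $\mathcal{C}(\Delta)\subset X$, hence an isometric copy of $c$, which is isometric to a bad hyperplane, so Theorem \ref{Theorem bad} applies) is also correct and consistent with the paper. But there is a genuine gap precisely where you flag it: in the nonseparable case you never actually produce a quotient of $X$ isometric to (or containing) an $A(S)$ space. Since (1) holds unconditionally in that case, the entire content of the theorem there is that (2) and (3) also hold unconditionally; your fallback of ``running the implications cyclically and using Remark \ref{rem quotient} for (3)$\Rightarrow$(1)'' buys nothing, because (3)$\Rightarrow$(1) is vacuous once (1) is known, and the cycle still needs (1)$\Rightarrow$(2) --- exactly the step you left open. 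Vague appeals to metric-projection or $L$-projection structure do not substitute for it.

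The paper closes this gap with a specific complementation result: Proposition 3.1 in \cite{Casini-Miglierina-Piasecki2015}, which upgrades an isometric copy of $c$ inside a separable Banach space to a \emph{$1$-complemented} isometric copy of $c$. A $1$-complemented subspace is isometric to a quotient of $X$ (if $P$ is a norm-one projection onto $Y$, then $X/\ker P$ is isometric to $Y$), so $X$ has a quotient isometric to $c$; and $c$ is itself an $A(S)$ space (it is $\mathcal{C}(K)$ for $K$ the one-point compactification of $\mathbb{N}$, and its unit ball has extreme points), so (2) and (3) follow at once. The paper then concludes (1) by Corollary 3.4 of \cite{Casini-Miglierina-Piasecki2015}, whereas you obtain (1) via the bad-hyperplane theorem; that difference is cosmetic. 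The essential missing idea in your proposal is the complementation theorem for isometric copies of $c$: without citing or proving it, the implication (1)$\Rightarrow$(2) in the nonseparable-dual case, and hence the proof, is incomplete.
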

\begin{proof}
	By taking into the account Theorem \ref{Characterization}, it is enough to consider the case when $X^*$ is nonseparable. Theorem 2.3 in \cite{Lazar-Lindenstrauss1971} states that a separable Lindenstrauss space $X$ with nonseparable dual contains a subspace isometric to the space $\mathcal{C}(\Delta)$, where $\Delta$ is the Cantor set. Since $\mathcal{C}(\Delta)$ contains an isometric copy of $c$, by Proposition 3.1 in \cite{Casini-Miglierina-Piasecki2015} there is a $1$-complemented copy of $c$. Therefore, $X$ has a quotient isometric to $c$. This shows that (2) and (3) hold true. Finally, by applying Corollary 3.4 in \cite{Casini-Miglierina-Piasecki2015} we conclude the proof. 
\end{proof}

\begin{rem}\label{Remark C(K)}
	In Theorem \ref{Characterization} the space $A(S)$ cannot be replaced by any space $\mathcal{C}(K)$ of continuous functions on the compact Hausdorff set $K$. Indeed, if $K$ is finite, then $\mathcal{C}(K)=\ell_{\infty}^{(n)}$ for some $n \in \mathbb{N}$. By \cite{Lazar-Lindestrauss1966,Michael-Pelczynski}, we know that every separable Lindenstrauss space contains an isometric copy of $\ell_{\infty}^{(n)}$ for every $n$. Since $\ell_{\infty}^{(n)}$ is always $1$-complemented,  $X$ has a quotient isometric to $\ell_{\infty}^{(n)}$. Moreover, if $K$ is an  infinite countable set, then, by Mazurkiewicz-Sierpi\'{n}ski Theorem (\cite{Mazurkiewicz-Sierpinski}), we know that $\mathcal{C}(K)$ contains an isometric copy of $c$. However, by Example \ref{Example Hyperplane}, we know that there is an $\ell_1$-predual such that $\ell_1$ fails the weak$^*$-FPP, whereas it does not have a quotient containing an isometric copy of $c$. It remains to consider the case where $K$ is uncountable. However, under this assumption $\mathcal{C}(K)^*$ is nonseparable and therefore $X$ cannot be an $\ell_1$-predual. 
\end{rem}

We conclude our paper by pointing out that some other equivalent conditions for the weak$^*$-FPP are known in the literature. We refer the interested reader to \cite{Casini-Miglierina-Piasecki2015,Casini-Miglierina-Piasecki-Popescu2017,Casini-Miglierina-Piasecki-Popescu2018,Piasecki1,Piasecki2}.

\end{document}